 \newtheorem{thm}{Theorem}[section]
 \newtheorem{lem}[thm]{Lemma}
 \theoremstyle{definition}
 \newtheorem{defn}[thm]{Definition}
 \theoremstyle{remark}
 \newtheorem*{ex}{Example}
 \numberwithin{equation}{section}
\begin{document}

%
%
%
%
%
%
%
%
%

\title[]
 {On Nilpotent Multipliers of Pairs of Groups}

\author[A. Hokmabadi]{Azam Hokmabadi}

\address{%
Department of Mathematics\\
 Faculty of Sciences\\
 Payame Noor University\\
 19395-4697 Tehran\\
 Iran}

\email{ahokmabadi@pnu.ac.ir}

\author[F. Mohammadzadeh]{Fahimeh Mohammadzadeh}
\address{Department of Mathematics\\
 Faculty of Sciences\\
 Payame Noor University\\
 19395-4697 Tehran\\
 Iran}
\email{F.mohamadzade@gmail.com}

\author[ B. Mashayekhy]{Behrooz Mashayekhy}
\address{Department of Mathematics\\
Center of Excellence in Analysis on Algebraic Structures\\
Ferdowsi University of Mashhad\\
 1159-91775 Mashhad \\ Iran}
\email{bmashf@um.ac.ir}

\subjclass{ 20F18, 20E34}

\keywords{ Pair of groups; Nilpotent multiplier; Finitely generated abelian groups. }

\date{January 1, 2004}

\begin{abstract}

In this paper, we determine the structure of the nilpotent multipliers of all pairs $(G,N)$ of finitely generated abelian groups where $N$ admits a complement in $G$. Moreover, some inequalities for the nilpotent multipliers of pairs of finite groups and their factor groups are given.

\end{abstract}

\maketitle

\section{Introduction}
\label{sec1}

Let $G$ be a group with a free presentation $1 \rightarrow R \rightarrow F \rightarrow G \rightarrow 1$. Then the $c$-nilpotent multiplier of $G$ is defined to be
$$ M^{(c)}(G)= \frac{R \cap \gamma_{c+1}(F)}{[R, \ _cF]}.$$

It is easy to see that $ M^{(c)}(G)$ is independent of the choice of the free presentation of $G$. In particular, $ M^{(1)}(G)$ is the well-known notion $M(G)$, the Schur multiplier of $G$, (see \cite{kar}).\

The structure of the Schur multiplier of a finitely generated abelian group is obtained by I. Schur \cite{kar}. In 1997, M.R.R. Moghaddam and the third author \cite{moma} gave an implicit formula for the $c$-nilpotent multiplier of a finite abelian group.\

The theory of the Schur multiplier was extended for pairs of groups by Ellis \cite{ell}, in 1998. By a pair of groups $(G,N)$, we mean a group $G$ with a normal subgroup $N$. The Schur multiplier of a pair $(G,N)$ of groups is a
functorial abelian group $M(G,N)$ whose principal feature is a
natural exact sequence
$$ H_3(G) \stackrel {\eta}{\rightarrow}H_3(\frac{G}{N})
 \rightarrow M(G,N) \rightarrow M(G) \stackrel {\mu}{\rightarrow}
M(\frac{G}{N})\rightarrow \frac{N}{[N,G]}\rightarrow (G)^{ab}
\stackrel {\alpha}{\rightarrow} (\frac{G}{N})^{ab} \rightarrow 0$$
in which  $H_3(G)$ is the third homology of $G$ with integer coefficients.
In particular, if $N=G$, then $M(G,G)$ is the usual Schur multiplier $M(G)$.\\

Let $(G,N)$ be a pair of groups. Ellis \cite{ell} showed that if $N$ admits a complement in $G$, then
\begin{equation}
M(G,N) \cong \ker(\mu: M(G) \rightarrow M(G/N)).\\
\end{equation}

Let $F/R$ be a free presentation of $G$ and $S$ be a subgroup of $F$ with $N \cong S/R$. If $N$ admits a complement in $G$, then (1.1) implies that
$$M(G,N)= \frac{R \cap [S,F]}{[R,F]},$$
(see \cite{msc}). This fact suggests the definition of the $c$-nilpotent multiplier of a pair $(G,N)$ of groups as follows:
$$M^{(c)}(G,N)= \frac{R \cap [S,\ _cF]}{[R,\ _cF]}.$$
In particular, if $G=N$, then $M^{(c)}(G,G) = M^{(c)}(G)$ is the $c$-nilpotent multiplier of $G$.

In this paper, we study the $c$-nilpotent multiplier of a pair of groups. In the next section, we present a formula for the $c$-nilpotent multipliers (and consequently for the Schur multipliers) of all pairs $(G,N)$ of finitely generated abelian groups where $N$ has a complement in $G$.
In the final section, we give some inequalities for the order, the exponent and the minimal number of
generators of the $c$-nilpotent multipliers of pairs of finite groups and their factor groups.

\section{ Nilpotent multipliers of pairs of finitely generated abelian groups }
\label{sec2}

In this section, we intend to find the structure of the $c$-nilpotent multiplier of a pair $(G,N)$ of finitely generated abelian groups, where $N$ has a complement in $G$. The proof relies on basic commutators and their properties.

\begin{defn} (\cite{hall})
Let $X$ be an independent subset of a free group, and select an
arbitrary total order for $X$. We define the basic commutators on
$X$, their weight \textit{wt}, and the ordering among them as
follows:

(1) \ The elements of $X$ are basic commutators of weight one,
ordered according to the total order previously chosen.

(2) \ Having defined the basic commutators of weight less than $n$,
the basic commutators of weight $n$ are the $c_k=[c_i,c_j]$, where

(a) \ $c_i$ and $c_j$ are basic commutators and $wt(c_i)+wt(c_j)=n$,
and

(b) \ $c_i>c_j$, and if $c_i=[c_s,c_t]$, then $c_j\geq c_t$.

(3) \ The basic commutators of weight $n$ follow those of weight
less than $n$. The basic commutators of weight $n$ are ordered among
themselves lexicographically; that is, if $[b_1,a_1]$ and
$[b_2,a_2]$ are basic commutators of weight $n$, then $[b_1,a_1]\leq
[b_2,a_2]$ if and only if $b_1<b_2$, or $b_1=b_2$ and $a_1<a_2$.
\end{defn}

M. Hall \cite{hall} proved that if $F$ is the free group on a finite set $X$, then the basic
commutators of weight $n$ on $X$ provide a basis for the free abelian
group $\gamma_n(F)/\gamma_{n+1}(F)$. The number of these basic commutators is given by Witt formula.

\begin{thm} (The Witt formula \cite{hall}) The number of basic
commutators of weight $n$  on $ d $  generators is given by the
following formula
$$ \chi_{n}(d)= \frac{1}{n}\sum_{m|n}\mu(m)
d^{\frac{n}{m}},$$ where $\mu(m)$ is the M\"{o}bius function, which is defined to be
 $$\mu(m)=\left\{\begin{array}{ll}
 1& ; m=1,\\
 0& ; m=p_1^{\alpha_1}...p_k^{\alpha_k} \ \ \  \exists \alpha_{i} > 1,\\
(-1)^s& ; m=p_1...p_s, \end{array}\right.$$ where the $p_i$'s are distinct prime numbers.
\end{thm}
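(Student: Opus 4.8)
The plan is to reduce the statement to a computation of the rank of the degree-$n$ homogeneous component of the free Lie ring on $d$ generators, and then to carry out that computation by a generating-function argument. Write $F$ for the free group on $X=\{x_1,\dots,x_d\}$ and set $L=\bigoplus_{n\ge 1}L_n$ with $L_n=\gamma_n(F)/\gamma_{n+1}(F)$; the bracket induced by the group commutator makes $L$ a graded Lie ring, and by the Magnus--Witt theorem $L$ is the \emph{free} Lie ring (over the integers) on the $d$ elements $x_i\gamma_2(F)$. Since, by M. Hall's theorem quoted above, the basic commutators of weight $n$ form a basis of the free abelian group $L_n$, the number we want is precisely the rank $\ell_n$ of $L_n$, and it suffices to prove $\ell_n=\frac1n\sum_{m\mid n}\mu(m)d^{n/m}$.

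For this I would invoke the Poincar\'e--Birkhoff--Witt theorem. The universal enveloping ring of $L$ is the free associative ring $T=\bigoplus_{n\ge 0}T_n$ on $d$ generators, where $T_n$ is free abelian of rank $d^n$, so the Hilbert series of $T$ is $\sum_{n\ge 0}d^nt^n=\frac1{1-dt}$. On the other hand, PBW provides a basis of $T$ consisting of the ordered products of elements of a homogeneous basis of $L$; comparing Hilbert series degree by degree gives
$$\frac1{1-dt}=\prod_{n\ge 1}\frac1{(1-t^n)^{\ell_n}}.$$
Taking $-\log$ of both sides and expanding yields $\sum_{m\ge 1}\frac{(dt)^m}{m}=\sum_{n\ge 1}\ell_n\sum_{k\ge 1}\frac{t^{nk}}{k}$, and equating coefficients of $t^N$ gives $\frac{d^N}{N}=\sum_{nk=N}\frac{\ell_n}{k}$, i.e. $d^N=\sum_{n\mid N}n\ell_n$. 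M\"obius inversion then gives $n\ell_n=\sum_{m\mid n}\mu(m)d^{n/m}$, which is the Witt formula.

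Alternatively one can argue purely combinatorially: if $\psi(m)$ denotes the number of aperiodic (primitive) words of length $m$ over a $d$-letter alphabet, then every word of length $N$ is uniquely a power of a primitive word, so $d^N=\sum_{m\mid N}\psi(m)$, whence $\psi(n)=\sum_{m\mid n}\mu(m)d^{n/m}$ by M\"obius inversion; since each primitive necklace of length $n$ contains exactly one Lyndon word, there are $\psi(n)/n$ Lyndon words of length $n$, and the Lyndon basis theorem identifies this count with $\ell_n$. Either way, the genuine content -- and the main obstacle -- is the structural input: the basis theorem for the free Lie ring (Hall's basic commutators, or equivalently the Lyndon/PBW basis over $\mathbb Z$) together with the Magnus--Witt identification of $\gamma_n(F)/\gamma_{n+1}(F)$ with the free Lie ring. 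Once that machinery is granted, the count itself is a short exercise in M\"obius inversion.
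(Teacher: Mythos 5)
This statement is quoted in the paper as a classical result, with a citation to M.~Hall's book; the paper itself supplies no proof of it, so there is no internal argument to compare yours against. Your derivation is correct and is the standard one: granting the Magnus--Witt identification of $\bigoplus_n \gamma_n(F)/\gamma_{n+1}(F)$ with the free Lie ring on $d$ generators, the basis theorem (Hall basic commutators, or equivalently Lyndon words), and the Poincar\'e--Birkhoff--Witt theorem (valid over the integers here because each homogeneous component is free abelian), the Hilbert-series identity $\frac{1}{1-dt}=\prod_{n\ge 1}(1-t^n)^{-\ell_n}$ gives $d^N=\sum_{n\mid N} n\ell_n$, and M\"obius inversion yields $\chi_n(d)=\frac{1}{n}\sum_{m\mid n}\mu(m)d^{n/m}$; your alternative count via primitive words and Lyndon representatives of aperiodic necklaces is an equally valid, more combinatorial route. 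You are also right to flag that the real content sits in the cited structural theorems rather than in the inversion step --- which is consistent with how the paper uses the formula, namely purely as a counting device for basic commutators in the proof of Theorem~2.4, with all structural input delegated to \cite{hall}.
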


Hereafter, let $G$ be a finitely generated abelian group with $G=N \oplus K$ where $N= \textbf{Z}^{(l)}\oplus \textbf{Z}_{r_1}\oplus \cdots \oplus \textbf{Z}_{r_t}$ and $K= \textbf{Z}^{(s)}\oplus \textbf{Z}_{r_{t+1}}\oplus \cdots \oplus \textbf{Z}_{r_n}$, such that $r_i | r_{i+1}$, for all $1 \leq i \leq n-1$. Put $m=l+s$, and $G_i\cong \textbf{Z}$, for all $1 \leq i \leq m$, and $G_{m+j} \cong \textbf{Z}_{r_j}$, for all $1\leq j \leq n$. Let
$$1 \rightarrow R_i=1 \rightarrow F_i = \langle y_i \rangle \rightarrow G_i \rightarrow 1 $$
be a free presentation of the infinite cyclic group $G_i$, for all $1 \leq i \leq m$, and let
$$1 \rightarrow R_j=\langle x_j^{r_j} \rangle \rightarrow F_{m+j} = \langle x_j \rangle \rightarrow G_{m+j} \rightarrow 1 $$ be a free presentation of $G_{m+j}$, for all $1 \leq j \leq n$.

Put $Y_1 =  \{y_1,y_2, \ldots , y_l \}$, $Y_2 =  \{y_{l+1},y_{l+2}, \ldots , y_m \}$, $X_1= \{ x_1,\ldots ,x_t \}$, $X_2= \{ x_{t+1},\ldots ,x_n \}$ and $Y=Y_1 \cup Y_2$, $X=X_1 \cup X_2$. Then it is easy to see that $G=N \oplus K$ has the following free presentation
$$ 1 \rightarrow R = T \gamma_{2}(F) \rightarrow F \stackrel{\theta}{\rightarrow} G \rightarrow 1 ,$$
where $F$ is the free group on $X \cup Y$, and $T=\langle x_1^{r_1},\ldots ,x_n^{r_n} \rangle^F $.
Considering the natural map $\theta : F \rightarrow G$,
we have $\theta ^{-1}(N)=SR$ with $S= \langle Y_1 \cup X_1 \rangle^F$ and so $1 \rightarrow R \rightarrow SR \rightarrow N \rightarrow 1$ is a free presentation of $N$, which implies that $$ M^{(c)}(G,N)=\frac{R \cap [RS,\ _cF]}{[R,\ _cF]}=\frac{[RS,\ _cF]}{[R,\ _cF]}.$$
Hence we have
\begin{equation}
M^{(c)}(G,N) \cong \frac{[S,\ _cF][T,\ _cF]\gamma_{c+2}(F)/\gamma_{c+2}(F)}{[T,\ _cF]\gamma_{c+2}(F)/\gamma_{c+2}(F)}.
\end{equation}

To determine the structure of $M^{(c)}(G,N)$, we need suitable bases for the free abelian groups $[S,\ _cF][T,\ _cF] \gamma_{c+2}(F) /\gamma_{c+2}(F)$ and $[T,\ _cF]\gamma_{c+2}(F)/\gamma_{c+2}(F)$. The authors have already obtained a basis for $[T,\ _cF]\gamma_{c+2}(F)/\gamma_{c+2}(F)$ as follows.

\begin{lem}(Lemma 3.2 in \cite{hmm})
Let $C_i$ be the set of all basic commutators of weight $c+1$ on $\{x_i,\cdots ,x_n, y_1,y_2, \cdots , y_m \}$. Then $[T,\ _cF]\gamma_{c+2}(F)/\gamma_{c+2}(F)$ is a free abelian group with a basis $D= \cup_{i=1}^{n}D_i$, where $$D_i = \{ b^{r_i}\gamma_{c+2}(F)|\  b \in C_i \ and \ x_i\ does \ appear\ in\ b \}.$$
\end{lem}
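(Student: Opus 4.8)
The plan is to do the whole computation inside $W:=\gamma_{c+1}(F)/\gamma_{c+2}(F)$, which by M.\ Hall's theorem recalled above is free abelian with the set $\mathcal B$ of all basic commutators of weight $c+1$ on $X\cup Y$ as a basis. Since $T\le F$, one has $[T,\ _cF]\le\gamma_{c+1}(F)$, so $V:=[T,\ _cF]\gamma_{c+2}(F)/\gamma_{c+2}(F)$ is a subgroup of $W$, and proving the lemma is the same as proving $V=\langle D\rangle$. Indeed, a basic commutator lying in $C_i$ and containing $x_i$ uses none of $x_1,\dots,x_{i-1}$, so from $b$ alone one recovers $i$ as the least subscript of an $x$-letter occurring in $b$; hence the elements of $D$ are pairwise distinct powers $b^{r_i}\gamma_{c+2}(F)$ of pairwise distinct members of the basis $\mathcal B$, and such a family is automatically $\textbf{Z}$-independent and a basis of the subgroup it generates.

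For $\langle D\rangle\subseteq V$, fix $i$ and a basic commutator $b\in C_i$ containing $x_i$. Pick an occurrence of $x_i$ as a leaf of the bracketing $b$ and let $b'$ be obtained from $b$ by replacing that single leaf by $x_i^{r_i}$. Two standard facts of commutator calculus finish the job: first, an iterated commutator of weight $c+1$ one of whose leaves lies in the normal subgroup $T$ (and the remaining leaves in $F$) belongs to $[T,\ _cF]$ — this is the three-subgroups (Hall--Witt) lemma applied repeatedly; thus $b'\in[T,\ _cF]$. Second, modulo $\gamma_{c+2}(F)$ a weight-$(c+1)$ iterated commutator is multilinear in its leaf entries; applying this to $x_i^{r_i}=x_i\cdots x_i$ gives $b'\equiv b^{\,r_i}\pmod{\gamma_{c+2}(F)}$. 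Hence $b^{\,r_i}\gamma_{c+2}(F)\in V$, so $D_i\subseteq V$ for every $i$ and $\langle D\rangle\subseteq V$.

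For the reverse inclusion I would use the standard description of $[T,\ _cF]$ modulo $\gamma_{c+2}(F)$: since $T$ is the normal closure in $F$ of $\{x_1^{r_1},\dots,x_n^{r_n}\}$, the group $V$ is generated by the cosets $[x_j^{r_j},f_1,\dots,f_c]\gamma_{c+2}(F)$ with $1\le j\le n$ and $f_1,\dots,f_c\in X\cup Y$ (the conjugations involved in passing to the normal closure and in the commutator collection all disappear modulo $\gamma_{c+2}(F)$). By the multilinearity used above, $[x_j^{r_j},f_1,\dots,f_c]\equiv[x_j,f_1,\dots,f_c]^{\,r_j}\pmod{\gamma_{c+2}(F)}$, and $[x_j,f_1,\dots,f_c]$ is multihomogeneous with $x_j$-degree at least $1$, so its expansion in the basis $\mathcal B$ involves only basic commutators $b$ in which $x_j$ occurs. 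For such a $b$, if $i=i(b)$ is the least $x$-subscript occurring in $b$ then $i\le j$, $b\in C_i$ with $x_i$ in $b$, so $b^{r_i}\gamma_{c+2}(F)\in D$ and, since $r_i\mid r_j$, also $b^{r_j}\gamma_{c+2}(F)\in\langle D\rangle$. Hence every generator $[x_j,f_1,\dots,f_c]^{\,r_j}\gamma_{c+2}(F)$ of $V$ lies in $\langle D\rangle$, giving $V\subseteq\langle D\rangle$ and therefore $V=\langle D\rangle$.

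The step that needs the most care is the description of $V$ used at the start of the last paragraph — that $[T,\ _cF]$ is generated modulo $\gamma_{c+2}(F)$ precisely by the commutators $[x_j^{r_j},f_1,\dots,f_c]$ on the normal generators of $T$ and the free generators of $F$. Although this is routine commutator calculus (one must check that conjugating a weight-$(c+1)$ commutator, or conjugating one of its leaves, changes it only by an element of $\gamma_{c+2}(F)$, and combine this with the near-multilinearity of weight-$(c+1)$ commutators modulo $\gamma_{c+2}(F)$), it is the point at which one has to be careful; the remaining ingredients — the multidegree bookkeeping and the divisibility relations $r_i\mid r_j$ — are straightforward.
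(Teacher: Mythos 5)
The paper does not actually prove this lemma: it is quoted verbatim from Lemma 3.2 of \cite{hmm}, so there is no in-paper argument to compare against line by line. Your proof is a correct, self-contained argument in the standard basic-commutator style, and it is consistent with how the paper uses Hall's theorem elsewhere (working in $\gamma_{c+1}(F)/\gamma_{c+2}(F)$, viewing everything through the Hall basis). The three pillars are all sound: independence is immediate because the elements of $D$ are nonzero powers of pairwise distinct Hall basis elements (and the index $i$ is recoverable from $b$ as the least $x$-subscript, so no collisions between the $D_i$); the inclusion $\langle D\rangle\subseteq V$ via substituting $x_i^{r_i}$ for one leaf, the normal-subgroup commutator containment, and multilinearity mod $\gamma_{c+2}(F)$ is standard; and the reverse inclusion correctly combines the reduction of $V$ to the generators $[x_j^{r_j},f_1,\dots,f_c]\gamma_{c+2}(F)$, the multidegree argument forcing $x_j$ to appear in every Hall basis element of the expansion, and the divisibility $r_{i(b)}\mid r_j$. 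It is worth noting that this last use of $r_i\mid r_{i+1}$ is genuinely necessary (for $F$ free on $x_1,x_2$, $c=1$, one gets $\langle[x_2,x_1]^{\gcd(r_1,r_2)}\rangle$, which agrees with the claimed basis only under divisibility), so you placed the hypothesis exactly where it must be used; the only steps you leave as ``routine'' (generation of $V$ by left-normed commutators on the normal generators modulo $\gamma_{c+2}(F)$, and the compatibility of basic commutators on a sub-alphabet with those on $X\cup Y$) are indeed standard and unproblematic.
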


Now we are ready to prove the main result of this section.

\begin{thm}
With the previous notations and assumptions, the following isomorphism holds.
$$M^{(c)}(G,N) \cong \textbf{Z}^{(f_0)} \oplus \textbf{Z}_{r_1}^{(f_1)} \oplus \cdots \oplus \textbf{Z}_{r_t}^{(f_t)}\oplus \textbf{Z}_{r_{t+1}}^{(f_{t+1}-g_{t+1})} \oplus \cdots \oplus \textbf{Z}_{r_n}^{(f_n - g_n)},$$
where $f_0=\chi_{c+1}(m)-\chi_{c+1}(m-l)$,
$f_i = \chi_{c+1}(m+n-i+1)-\chi_{c+1}(m+n-i)$, for $1 \leq i \leq n$, and
$g_i = \chi_{c+1}(m+n-l-i+1)-\chi_{c+1}(m+n-l-i)$, for $t+1 \leq i \leq n$.

\end{thm}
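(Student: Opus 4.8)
The plan is to use the isomorphism (2.1), which expresses $M^{(c)}(G,N)$ as a quotient of two subgroups of the free abelian group $\gamma_{c+1}(F)/\gamma_{c+2}(F)$. By M. Hall's theorem, this ambient group has as a basis the set $B$ of all basic commutators of weight $c+1$ on $X\cup Y$. The numerator is $[S,\,_cF][T,\,_cF]\gamma_{c+2}(F)/\gamma_{c+2}(F)$ and the denominator is $[T,\,_cF]\gamma_{c+2}(F)/\gamma_{c+2}(F)$, for which Lemma~2.3 already gives the explicit basis $D=\bigcup_{i=1}^n D_i$. So the whole computation reduces to (a) producing an explicit basis for the numerator compatible with a decomposition into cyclic summands, and (b) reading off the quotient.

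First I would analyze $[S,\,_cF]\gamma_{c+2}(F)/\gamma_{c+2}(F)$. Since $S=\langle Y_1\cup X_1\rangle^F$, a commutator $[s,f_1,\dots,f_c]$ with $s\in S$ lies in this group, and working modulo $\gamma_{c+2}(F)$ one can express it in terms of basic commutators of weight $c+1$ in which at least one entry comes from $Y_1\cup X_1$. The natural candidate basis is therefore $\{\,b\gamma_{c+2}(F)\mid b\in B,\ b$ involves at least one letter of $X_1\cup Y_1\,\}$; equivalently, $B$ minus the basic commutators supported entirely on $X_2\cup Y_2$. The number of the latter is $\chi_{c+1}(m-l+(n-t))$ basic commutators on $\{x_{t+1},\dots,x_n,y_{l+1},\dots,y_m\}$, but one must be careful: $[S,\,_cF]$ also contains the relator contributions $x_i^{r_i}$-twisted commutators for $i\le t$, so the torsion is built in. The cleanest route, following the pattern of Lemma~2.3, is to show that $[S,\,_cF][T,\,_cF]\gamma_{c+2}(F)/\gamma_{c+2}(F)$ has a basis consisting of: all $b\gamma_{c+2}(F)$ for $b\in B$ involving some letter of $Y_1$ (these give free $\mathbf Z$ summands), all $b^{r_i}\gamma_{c+2}(F)$ where $b\in C_i$ and $x_i$ appears in $b$ for $1\le i\le n$ (the $D$-part, giving $\mathbf Z_{r_i}$ after quotienting — for $i\le t$ these survive, for $i>t$ they are killed), together with the pieces of $[S,\,_cF]$ not already counted. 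I would organize the basic commutators of weight $c+1$ by the smallest-indexed letter appearing in them and run the same telescoping/Witt-formula count as in \cite{hmm}.

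Once a basis for the numerator is in hand that refines the basis $D$ of the denominator, the quotient is immediate: each free generator of the numerator not lying in the denominator contributes either $\mathbf Z$ (if it is an untwisted basic commutator) or $\mathbf Z_{r_i}$ (if it is $r_i$ times a basic commutator), and the $D_i$ with $i>t$ get absorbed, which is exactly why the exponents $f_i-g_i$ appear for $i>t$ while the full $f_i$ appears for $i\le t$. The Witt-formula bookkeeping then yields $f_0=\chi_{c+1}(m)-\chi_{c+1}(m-l)$ (basic commutators on all $m$ of the $y$'s minus those avoiding $Y_1$), $f_i=\chi_{c+1}(m+n-i+1)-\chi_{c+1}(m+n-i)$ (basic commutators on $\{x_i,\dots,x_n,y_1,\dots,y_m\}$ in which $x_i$ genuinely occurs), and $g_i=\chi_{c+1}(m+n-l-i+1)-\chi_{c+1}(m+n-l-i)$ (the analogous count after discarding $Y_1$, i.e. the $D_i$-generators that already lie in $[S,\,_cF]\cap[T,\,_cF]$-type overlap). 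The main obstacle I anticipate is step (a): proving that the proposed set actually spans $[S,\,_cF][T,\,_cF]\gamma_{c+2}(F)/\gamma_{c+2}(F)$ and is $\mathbf Z$-independent — this requires the standard but delicate collection process for basic commutators (expressing an arbitrary commutator $[s t',\,_cF]$ with $s\in S$, $t'$ a relator word, in terms of the proposed generators modulo $\gamma_{c+2}(F)$), exactly the kind of argument underlying Lemma~2.3, and keeping the twisted ($r_i$-th power) generators and the untwisted ones correctly separated so that no unexpected relations collapse the rank.
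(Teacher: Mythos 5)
Your proposal follows essentially the same route as the paper: starting from (2.1), it takes Lemma~2.3's basis for $[T,\ _cF]\gamma_{c+2}(F)/\gamma_{c+2}(F)$, takes as a basis of the numerator the weight-$(c+1)$ basic commutators involving a letter of $X_1\cup Y_1$ together with the $r_i$-th powers of those on $X_2\cup Y_2$ in which some $x_i\in X_2$ appears, and then reads off the quotient by Witt-formula counting, arriving at exactly the paper's $f_0$, $f_i$ and $g_i$. The ``delicate'' step you single out is handled in the paper just as you suggest: one checks the proposed set generates and invokes Hall's linear independence of distinct basic commutators modulo $\gamma_{c+2}(F)$, so your plan is correct and matches the published proof.
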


\begin{proof}
In order to determine the structure of $M^{(c)}(G,N)$, we need to find a suitable basis for the free abelian group
$[S,\ _cF][T,\ _cF]\gamma_{c+2}(F)/\gamma_{c+2}(F)$. Let $E$ be the set of all basic commutators of weight $c+1$ on $X \cup Y$ in which at least one of the $ x_{1},\ldots , x_{t}, y_1,\ldots, y_l $ does appear. Put
$$\bar{E}= \{ b\gamma_{c+2}(F) | b\in E \}.$$ Then $ D \cup \bar{E}$ generates the free abelian group
$[S,\ _cF][T,\ _cF]\gamma_{c+2}(F)/\gamma_{c+2}(F)$. Recall that the distinct basic commutators are linearly independent (see \cite{hall}). Hence $\hat{E}=D' \cup \bar{E}$ is a basis for the mentioned free abelian group where $D'$ is the set of all elements $b^{r_i} \gamma_{c+1}(F)$ such that $b$ is a basic commutator of weight $c+1$ on $X_2 \cup Y_2$ in which one of the elements of $X_2$ does appear. In order to determine the structure of the group $M^{(c)}(G,N)$, we present $\hat{E}$ as follows:

$$ \hat{E} = (A_1-A_2) \cup (\cup_{i=1}^t B_i) \cup  (\cup_{i=t+1}^n (B_i - N_i )) \cup  (\cup_{i=t+1}^{n}H_i) $$

where \\
$A_1 = \{ b \gamma_{c+2}(F)|\  b \ is\ a\ basic\ commutator\ of\ weight\ c+1\ on\ Y  \}$,\\
$A_2 = \{ b \gamma_{c+2}(F)|\  b \ is\ a\ basic\ commutator\ of\ weight\ c+1\ on\ Y_2  \}$,\\
$B_i = \{ b \gamma_{c+2}(F)|\  b \ is\ a\ basic\ commutator\ of\ weight\ c+1\ on\ $

$\hspace {1.1 cm} \{x_i,x_{i+1}, \ldots , x_{n} \} \cup  Y\ such\ that\  x_{i}\ does \ appear\ in\ b  \}$, \\
$N_i = \{ b \gamma_{c+2}(F)|\  b \ is\ a\ basic\ commutator\ of\ weight\ c+1\ on\ $

 $ \hspace {1.1 cm} \{x_{i},x_{i+1}, \ldots , x_{n} \} \cup Y_2 \ such\ that\  x_i \ does\ appear\ in\ b  \}$,\\
$H_i = \{ b^{r_i} \gamma_{c+2}(F)|\  b\gamma_{c+2}(F) \in N_i \}$,\\

On the other hand, Lemma 2.3 provides a basis for the free abelian group $[T,\ _cF]\gamma_{c+2}(F)/\gamma_{c+2}(F)$ as follows.
$$D=(\cup_{i=1}^t B'_i) \cup  (\cup_{i=t+1}^n (B'_i - H_i) ) \cup  (\cup_{i=t+1}^{n}H_i),$$
where $B'_i = \{ b^{r_i} \gamma_{c+2}(F)|\  b\gamma_{c+2}(F) \in B_i \}$, for all $1\leq i\leq n$.
Now considering (2.1) and the obtained bases for the free abelian groups $[T,\ _cF]\gamma_{c+2}(F)/\gamma_{c+2}(F)$ and $[S,\ _cF][T,\ _cF]\gamma_{c+2}(F)/\gamma_{c+2}(F)$, we can conclude that $M^{(c)}(G,N)$ is a finitely generated abelian group in which the number of copies of $\textbf{Z}$ is $|A_1|-|A_2|$ and the number of copies of $\textbf{Z}_{r_i}$ is $|B_i|$, for $1 \leq i \leq t$ and it is $|B_i|-|N_i|$, for $ t+1\leq i \leq n$. On the other hand,\\
$|A_1|=\chi_{c+1}(m)$, $|A_2|=\chi_{c+1}(m-l)$,\\
$|B_i|= \chi_{c+1}(m+n-i+1)-\chi_{c+1}(m+n-i)$, for $1 \leq i \leq n$, \\
$|N_i|= \chi_{c+1}(m+n-l-i+1)-\chi_{c+1}(m+n-l-i)$, for $t+1 \leq i \leq n$.\\
Now putting $f_0=|A_1|-|A_2|$, $f_i=|B_i|$ and $g_i=|N_i|$, for all $1 \leq i \leq n$, we have $$M^{(c)}(G,N) \cong \textbf{Z}^{(f_0)} \oplus \textbf{Z}_{r_1}^{(f_1)} \oplus \cdots \oplus \textbf{Z}_{r_t}^{(f_t)}\oplus \textbf{Z}_{r_{t+1}}^{(f_{t+1}-g_{t+1})} \oplus \cdots \oplus \textbf{Z}_{r_n}^{(f_n - g_n)}.$$
\end{proof}

Note that the extra condition $r_t |r_{t+1}$ in the above theorem is not essential in the process of determining the structure of $M^{(c)}(G,N)$. In fact without this condition the structure of $M^{(c)}(G,N)$ is too complicated to state. Also, the mentioned condition helps us to state the proof of Theorem 2.4 more clear and understandable. The following example shows that the mentioned condition is not essential and the above theorem holds for all pairs $(G,N)$ of finitely generated abelian groups such that $N$ admits a complement in $G$ (without any extra condition).

\begin{ex}

Let $\langle x_1 | x_1^{p^2} \rangle \cong \textbf{Z}_{p^2}$, $\langle x_2 | x_2^{p^4} \rangle \cong \textbf{Z}_{p^4}$, $\langle x_3 | x_3^{p^3} \rangle \cong \textbf{Z}_{p^3}$, $\langle x_4 | x_4^{p^5} \rangle \cong \textbf{Z}_{p^5}$, and $\langle y_i \rangle \cong \textbf{Z}$, for all $1 \leq i \leq m$.
Put $G=N \oplus K$, where $N= \langle y_1 \rangle \oplus \cdots \oplus  \langle y_l \rangle \oplus  \langle x_1 \rangle \oplus  \langle x_2 \rangle$  and $K= \langle y_{l+1} \rangle \oplus \cdots \oplus  \langle y_{m} \rangle \oplus  \langle x_3 \rangle \oplus  \langle x_4 \rangle$.
 Put $X_1=\{ x_1, x_2 \}$, $X_2=\{ x_3,x_4 \}$, $X=X_1 \cup X_2$, and $Y_1=\{ y_1,\ldots,y_l \}$, $Y_2=\{ y_{l+1},\ldots,y_{m} \}$, $Y=Y_1 \cup Y_2$. Then
$ 1 \rightarrow R = T \gamma_{2}(F) \rightarrow F {\rightarrow} G \rightarrow 1 ,$ is a free presentation of $G$
and $1 \rightarrow R \rightarrow SR \rightarrow N \rightarrow 1$ is a free presentation of $N$ where $F$ is the free group on $X \cup Y$, $T=\langle x_1^{p^2}, x_2^{p^4},x_3^{p^3},x_4^{p^5} \rangle^F $ and $S= \langle Y_1 \cup X_1 \rangle^F$.
Define \\
$A_1 = \{ b \gamma_{c+2}(F)|\  b \ is\ a\ basic\ commutator\ of\ weight\ c+1\ on\ Y  \}$,\\
$A_2 = \{ b \gamma_{c+2}(F)|\  b \ is\ a\ basic\ commutator\ of\ weight\ c+1\ on\ Y_2  \}$,\\
$C_1= \{ b \gamma_{c+2}(F)|\  b \ is\ a\ basic\ commutator\ of\ weight\ c+1\ on $

$\hspace {3.4 cm} X \cup Y \ such \ that \ x_1 \ does\ appear \ in \ b \} $,\\
$C_2= \{ b \gamma_{c+2}(F)|\  b \ is\ a\ basic\ commutator\ of\ weight\ c+1\ on$

$\hspace {2.5 cm} \{x_2,x_4\} \cup Y \ such \ that \ x_2 \ does\ appear \ in \ b \} $,\\
$C_3= \{ b \gamma_{c+2}(F)|\  b \ is\ a\ basic\ commutator\ of\ weight\ c+1\ on$

$\hspace {2 cm} \{x_2,x_3,x_4\} \cup Y \ such \ that \ x_3 \ does\ appear \ in \ b  \} $,\\
$C_4= \{ b \gamma_{c+2}(F)|\  b \ is\ a\ basic\ commutator\ of\ weight\ c+1\ on$

$\hspace {3.1 cm} \{x_4\} \cup Y \ such \ that \ x_4 \ does\ appear \ in \ b \} $,\\
$N_3= \{ b \gamma_{c+2}(F)|\  b \ is\ a\ basic\ commutator\ of\ weight\ c+1\ on$

$\hspace {2.5 cm} \{x_3,x_4\} \cup Y_2 \ such \ that \ x_3 \ does\ appear \ in \ b \} $,\\
$N_4= \{ b \gamma_{c+2}(F)|\  b \ is\ a\ basic\ commutator\ of\ weight\ c+1\ on$

$\hspace {3 cm} \{x_4\} \cup Y_2 \ such \ that \ x_4 \ does\ appear \ in \ b \} $,\\
$D_1= \{ b^{p^2} \gamma_{c+2}(F)| \ b \gamma_{c+2}(F) \in C_1 \}$\\
$D_2= \{ b^{p^4} \gamma_{c+2}(F)| \ b \gamma_{c+2}(F) \in C_2 \}$\\
$D_3= \{ b^{p^3} \gamma_{c+2}(F)| \ b \gamma_{c+2}(F) \in C_3 \}$\\
$D_4= \{ b^{p^5} \gamma_{c+2}(F)| \ b \gamma_{c+2}(F) \in C_4 \}$\\
$H_3= \{ b^{p^3} \gamma_{c+2}(F)| \ b \gamma_{c+2}(F) \in N_3 \}$\\
$H_4= \{ b^{p^5} \gamma_{c+2}(F)| \ b \gamma_{c+2}(F) \in N_4 \}$\\

Then using an argument similar to the proof of Theorem 2.4, one can obtain that
$\hat{E}=(A_1-A_2) \cup (C_1 \cup C_2 \cup (C_3-N_3)\cup (C_4-N_4)) \cup (H_3 \cup H_4)$
is a basis for $[S,\ _cF][T,\ _cF]\gamma_{c+2}(F)/\gamma_{c+2}(F)$ and $D= D_1 \cup D_2 \cup (D_3 -H_3) \cup (D_4-H_4) \cup (H_3 \cup H_4)$ is a basis for  $[T,\ _cF]\gamma_{c+2}(F)/\gamma_{c+2}(F)$. Therefore by (2.1) we have
$$M^{(c)}(G,N) \cong \textbf{Z}^{(|A_1-A_2|)} \oplus \textbf{Z}_{p^2}^{(|D_1|)} \oplus \textbf{Z}_{p_3}^{(|D_3-H_3|)}\oplus \textbf{Z}_{p^4}^{(|D_2|)} \oplus \textbf{Z}_{p^5}^{(|D_4-H_4|)},$$
and hence
$$M^{(c)}(G,N) \cong \textbf{Z}^{(f_0)} \oplus \textbf{Z}_{p^2}^{(f_1)} \oplus \textbf{Z}_{p_3}^{(f_2-g_3)}\oplus \textbf{Z}_{p^4}^{(f_3)} \oplus \textbf{Z}_{p^5}^{(f_4-g_4)},$$
where
 $f_0=\chi_{c+1}(m)-\chi_{c+1}(m-l)$,
$f_i = \chi_{c+1}(m+4-i+1)-\chi_{c+1}(m+4-i)$, for $1 \leq i \leq 4$, and
$g_i = \chi_{c+1}(m-l+4-i+1)-\chi_{c+1}(m-l+4-i)$, for $3 \leq i \leq 4$.
\end{ex}


\section{ Some inequalities for the $c$-nilpotent multiplier of a pair of groups}

In this section, we give some inequalities for the order, the exponent and the minimal number of
generators of the $c$-nilpotent multipliers of pairs of finite groups and their factor groups. We recall the following lemmas that we need them in the sequel.\\

\begin{lem} (Theorem 2.2 in \cite{mss}) Let $(G,N) $ be a pair of finite groups and, $K$ be a normal subgroup of $G$ contained in $N$.
Then \\
(i)  $|\frac{ K \cap [N,\ _cG]}{ [K,\ _cG]}||M^{(c)}(G,N)|=|M^{(c)}(\frac{G}{K},\frac{N}{K})||M^{(c)}(G,K)|$;\\
(ii) $d(M^{(c)}(G,N))\leq d(M^{(c)}(\frac{G}{K},\frac{N}{K}))+d(M^{(c)}(G,K))$;\\
(iii) $exp(M^{(c)}(G,N))\leq exp(M^{(c)}(\frac{G}{K},\frac{N}{K}))exp(M^{(c)}(G,K))$;\\
(iv) $d(M^{(c)}(\frac{G}{K},\frac{N}{K}))\leq d(M^{(c)}(G,N))+d(\frac{ K \cap [N,\ _cG]}{ [K,\ _cG]})$;\\
(v) $exp(M^{(c)}(\frac{G}{K},\frac{N}{K}))$ divides $ exp(M^{(c)}(G,N)) exp(\frac{ K \cap [N,\ _cG]}{ [K,\ _cG]}),$\\
where $d(X)$ is the minimal number of generators of the group $X$.
\end{lem}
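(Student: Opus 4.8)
The plan is to derive all five assertions from a single natural exact sequence attached to a free presentation. Fix a free presentation $1\to R\to F\stackrel{\theta}{\to}G\to 1$ and choose subgroups $R\le T\le S\le F$ with $S/R\cong N$ and $T/R\cong K$; these exist because $K\le N$ and $N$ is normal in $G$. Put $A=[R,{}_cF]$, $B=[T,{}_cF]$ and $C=[S,{}_cF]$, so that $A\le B\le C$, $A\le R$, $B\le T$, $C\le S$, $B\le C$, and each of $R$ and $B$ is normal in $F$. Using the routine identities $[N,{}_cG]=CR/R$, $[K,{}_cG]=BR/R$ and $K\cap[N,{}_cG]=(T\cap CR)/R$, one identifies $M^{(c)}(G,N)=(R\cap C)/A$, $M^{(c)}(G,K)=(R\cap B)/A$, $M^{(c)}(G/K,N/K)=(T\cap C)/B$ and $\frac{K\cap[N,{}_cG]}{[K,{}_cG]}=(T\cap CR)/(BR)$.

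The core step is to show that the natural (inclusion-induced) maps fit into an exact sequence of abelian groups
$$1\longrightarrow\frac{R\cap B}{A}\stackrel{\iota}{\longrightarrow}\frac{R\cap C}{A}\stackrel{\pi}{\longrightarrow}\frac{T\cap C}{B}\stackrel{\rho}{\longrightarrow}\frac{T\cap CR}{BR}\longrightarrow 1,$$
where $\iota$ is induced by $R\cap B\le R\cap C$, $\pi$ by $rA\mapsto rB$, and $\rho$ by $xB\mapsto x(BR)$. Injectivity of $\iota$ and surjectivity of $\rho$ are immediate; exactness at $(R\cap C)/A$ follows from $(R\cap C)\cap B=R\cap B$, which holds since $B\le C$. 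The delicate point is exactness at $(T\cap C)/B$: there $\mathrm{im}\,\pi=(R\cap C)B/B$ and $\ker\rho=\bigl((T\cap C)\cap BR\bigr)/B$, and these coincide because the Dedekind modular law gives $C\cap BR=B(C\cap R)=(R\cap C)B$ (using $B\le C$) while $BR\le T$ (using $B,R\le T$). One also notes that every term here is a section of $\gamma_{c+1}(F)/[R,{}_cF]$, hence abelian, so that $d$ and $\exp$ behave as expected.

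Granting the exact sequence, (i) is just the multiplicativity of orders of finite groups along an exact sequence, namely $\left|\frac{R\cap B}{A}\right|\left|\frac{T\cap C}{B}\right|=\left|\frac{R\cap C}{A}\right|\left|\frac{T\cap CR}{BR}\right|$, which is precisely the claimed formula. For the remaining parts, set $Q=\mathrm{im}\,\pi=\ker\rho$ and split the sequence into the short exact sequences $1\to M^{(c)}(G,K)\to M^{(c)}(G,N)\to Q\to 1$ and $1\to Q\to M^{(c)}(G/K,N/K)\to\frac{K\cap[N,{}_cG]}{[K,{}_cG]}\to 1$. Applying the standard bounds $d(Y)\le d(X)+d(Z)$ and $\exp(Y)\mid\exp(X)\exp(Z)$ for an extension $1\to X\to Y\to Z\to 1$ to the first sequence, and using $d(Q)\le d(M^{(c)}(G/K,N/K))$ and $\exp(Q)\mid\exp(M^{(c)}(G/K,N/K))$ (as $Q$ is a subgroup of that finite abelian group), gives (ii) and (iii); applying them to the second sequence, and using $d(Q)\le d(M^{(c)}(G,N))$ and $\exp(Q)\mid\exp(M^{(c)}(G,N))$ (as $Q$ is a quotient of $M^{(c)}(G,N)$), gives (iv) and (v).

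I expect the main obstacle to be the verification of exactness of the four-term sequence, and in particular the identity $\mathrm{im}\,\pi=\ker\rho$: one must check that $\pi$ and $\rho$ are well defined on the indicated quotients and that the various intersections and products of $R,T,B,C$ collapse as claimed — this is exactly where the normality of $R$ in $F$ and the modular law do the real work.
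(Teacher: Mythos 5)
Your argument is essentially correct, and it is worth noting that the paper itself offers no proof of this lemma at all: it is quoted verbatim as Theorem 2.2 of the cited paper of Moghaddam--Salemkar--Saany, so there is no internal proof to compare against. Your self-contained derivation is the natural one and almost certainly mirrors the cited source: the identifications $M^{(c)}(G,N)=(R\cap C)/A$, $M^{(c)}(G,K)=(R\cap B)/A$, $M^{(c)}(G/K,N/K)=(T\cap C)/B$ (via the induced presentation $F/T$ of $G/K$) and $\bigl(K\cap[N,{}_cG]\bigr)/[K,{}_cG]\cong (T\cap CR)/(BR)$ are right, and the four-term exact sequence is verified correctly, with the key step $(T\cap C)\cap BR=C\cap BR=B(C\cap R)$ following from $BR\le T$ and the Dedekind law exactly as you say; the alternating order identity then gives (i), and the two short exact sequences give (ii)--(v). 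One justification should be repaired: your parenthetical reason for abelianness, namely that each term is a section of $\gamma_{c+1}(F)/[R,{}_cF]$, does not work, since that quotient is not abelian in general (take $R=1$). What you actually need is only that $M^{(c)}(G/K,N/K)=(T\cap C)/B$ is abelian, so that the subgroup $Q=\ker\rho$ satisfies $d(Q)\le d\bigl(M^{(c)}(G/K,N/K)\bigr)$ in (ii); this follows from the standard commutator inclusion $[\gamma_{c+1}(F),T]\le[T,{}_{c+1}F]\le[T,{}_cF]=B$ for the normal subgroup $T$ of $F$ (and the analogous inclusion with $R$ shows the other multiplier terms are abelian too). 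All the remaining bounds you invoke ($d$ and $\exp$ along extensions, $\exp$ of subgroups, $d$ and $\exp$ of quotients) hold for arbitrary finite groups, so with that one fix the proof is complete.
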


\begin{lem} (Lemma 22 in \cite{mmh}) Let $F/R$ be a free presentation of a group $G$  and
$B$ a normal subgroup of $G$, with $B = S/R$. Then there exists the following epimorphism
$$\otimes ^{c+1}(B , G/B) \longrightarrow  \frac{[S,\ _{c}F]}{[R,\ _{c}F]
[S,\ _{c+1}F] \prod_{i=2}^{c+1}\gamma_{c+1}(S,F)_{i}} ,$$ in which
for all $2\leq i\leq c$, $\gamma_{c+1}(S,F)_{i}= [D_1, D_2, \ldots,
D_{c+1}]$ such that $D_{1}=D_{i}=S $ and $D_{j}=F$, for all $j
\neq 1,i$, and   $\otimes^{c+1}(B,G/B)= B\otimes G/B\otimes ... \otimes G/B$ involves $c$
copies of $G/B$.
\end{lem}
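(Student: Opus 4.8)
The plan is to exhibit the epimorphism explicitly on generating tensors and to check that it is well defined by a commutator-collection argument, the three factors of the denominator $L:=[R,\ _cF]\,[S,\ _{c+1}F]\,\prod_{i=2}^{c+1}\gamma_{c+1}(S,F)_i$ being tailored to absorb exactly the three kinds of error term that arise. First I would record the structural facts. Each of the three factors of $L$ is normal in $F$ (because $R$ and $S$ are normal in $F$), and $L\subseteq[S,\ _cF]$ (because $R\le S\le F$), so $Q:=[S,\ _cF]/L$ is a genuine quotient group. Since $[[S,\ _cF],F]=[S,\ _{c+1}F]\subseteq L$, the group $F$, and hence $S$, centralises $Q$; thus conjugations are harmless modulo $L$, the group $Q$ is generated by the images of the left-normed commutators $[s,f_1,\dots,f_c]$ with $s\in S$ and $f_j\in F$, and moreover $Q$ is abelian, since $[[S,\ _cF],[S,\ _cF]]\subseteq[[S,\ _cF],S]\subseteq[S,\ _{c+1}F]\subseteq L$. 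I would also use the routine fact that $[A,\gamma_b(F)]\subseteq[A,\ _bF]$ whenever $A$ is normal in $F$ (three subgroups lemma and induction on $b$), from which it follows by induction on bracket length that every iterated commutator of weight $n$ whose left-most entry lies in $S$ and whose remaining entries lie in $F$ already lies in $[S,\ _{n-1}F]$.

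Next I would define the map: for $b\in B$ and $\bar g_1,\dots,\bar g_c\in G/B$, choose $s\in S$ with $sR=b$ and $f_j\in F$ with $f_jS=\bar g_j$, and put $\phi(b,\bar g_1,\dots,\bar g_c):=[s,f_1,\dots,f_c]\,L$. The substance of the proof is to show $\phi$ is multilinear and independent of the chosen representatives. For additivity in the first variable one expands $[s_1s_2,f_1,\dots,f_c]$ by repeated use of $[xy,z]=[x,z]^{y}[y,z]$ and $[x,yz]=[x,z][x,y]^{z}$; a straightforward induction on the collection process shows that, apart from the two main terms $[s_1,f_1,\dots,f_c]$ and $[s_2,f_1,\dots,f_c]$, every surviving term is an iterated commutator in which each $f_j$ occurs exactly once, $s_1$ and $s_2$ occur at least twice in total, and the left-most entry is $s_1$ or $s_2$ --- hence a commutator of weight $\ge c+2$ with left-most entry in $S$, which therefore lies in $[S,\ _{c+1}F]\subseteq L$ by the fact recorded above. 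Taking $s_2=r\in R$ in the same computation gives $[sr,f_1,\dots,f_c]\equiv[s,f_1,\dots,f_c]\pmod L$, the remaining factor $[r,f_1,\dots,f_c]$ lying in $[R,\ _cF]$; so $\phi$ is independent of the choice of $s$ and factors through $B=S/R$. Additivity in the $j$-th $G/B$-variable is proved identically, writing $[s,f_1,\dots,f_c]=[w,f_j,f_{j+1},\dots,f_c]$ with $w:=[s,f_1,\dots,f_{j-1}]\in S$ and expanding $[w,f_jf_j',f_{j+1},\dots,f_c]$: again every error term has weight $\ge c+2$ with left-most entry $s\in S$, so lies in $[S,\ _{c+1}F]$. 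Finally, if $f_j\in S$ then $[s,f_1,\dots,f_c]$ is a left-normed commutator of weight $c+1$ with an $S$-entry in position $1$ and in position $j+1$, so it lies in $\gamma_{c+1}(S,F)_{j+1}\subseteq L$; hence $\phi$ does not depend on the choice of $f_j$ modulo $S$, and so factors through $G/B$ in each of the last $c$ variables.

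Since $Q$ is abelian, the multilinear map $B\times(G/B)\times\cdots\times(G/B)\to Q$ obtained in this way factors through the tensor product, giving a homomorphism $\otimes^{c+1}(B,G/B)\to Q$ sending $b\otimes\bar g_1\otimes\cdots\otimes\bar g_c$ to $[s,f_1,\dots,f_c]\,L$; it is surjective because its image already contains all the generators $[s,f_1,\dots,f_c]\,L$ of $Q$. I expect the only real difficulty to be the bookkeeping in the collection argument of the second paragraph: one has to verify carefully that, after all the conjugations produced by $[xy,z]=[x,z]^{y}[y,z]$ have been absorbed, every surviving error term really does have weight at least $c+2$ and a left-most entry lying in $S$ (or in $R$), so that $[S,\ _{c+1}F]$ captures it; by contrast, the terms that must be captured by $[R,\ _cF]$ and by the subgroups $\gamma_{c+1}(S,F)_i$ are immediate from the definitions once this weight-and-position bookkeeping is in hand.
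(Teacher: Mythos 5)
The paper itself gives no proof of this statement: it is quoted as Lemma 2.2 ("Lemma 22") of \cite{mmh}, so there is no internal argument to compare against, and your proposal must be judged on its own merits and against the standard proof in that reference. Your argument is correct and is essentially that standard proof: one defines the map on generating tensors by $b\otimes\bar g_1\otimes\cdots\otimes\bar g_c\mapsto [s,f_1,\ldots,f_c]\,L$ and checks by commutator collection that the three factors of the denominator absorb precisely the three kinds of error terms --- $[R,\ _cF]$ for changing $s$ modulo $R$, $[S,\ _{c+1}F]$ for the weight $\geq c+2$ terms (leftmost entry in $S$) produced by expanding products and conjugations, and $\gamma_{c+1}(S,F)_{i}$ for changing $f_{i-1}$ modulo $S$ --- after which additivity in each variable plus the abelianness and $F$-centrality of the quotient give a homomorphism from the tensor product, surjective because the left-normed commutators generate the quotient. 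The auxiliary facts you invoke, namely $[A,\gamma_b(F)]\subseteq[A,\ _bF]$ for $A$ normal in $F$ (three subgroups lemma and induction) and consequently that any iterated commutator of weight $n$ whose leftmost entry lies in $S$ and whose other entries lie in $F$ belongs to $[S,\ _{n-1}F]$, are true and do suffice to justify the bookkeeping you defer, so the proposal has no genuine gap.
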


\begin{lem} (Lemma 3.1 in \cite{mss}) Let
$H$ and $N$ be subgroups of a group $G$ and $N=N_0 \supseteq N_1
\supseteq \cdots$ a chain of normal subgroups of $N$ such that
$[N_i,G]\subseteq N_{i+1}$, for all $i \in \mathbf{N}$. Then $[N_i,[H,
_jG]] \subseteq N_{i+j+1}$, for all positive integers $i,j$.
\end{lem}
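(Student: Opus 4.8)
The goal is to show that for subgroups $H, N$ of $G$ equipped with a descending chain $N = N_0 \supseteq N_1 \supseteq \cdots$ of normal subgroups of $N$ with $[N_i, G] \subseteq N_{i+1}$, one has $[N_i, [H, {}_j G]] \subseteq N_{i+j+1}$ for all positive integers $i, j$. The natural approach is induction on $j$, using standard commutator identities (the Hall--Witt identity, or more elementarily the identity $[a, bc] = [a,c][a,b]^c$ and $[ab, c] = [a,c]^b [b,c]$) together with the hypothesis on the chain.

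First I would settle the base case $j = 1$: here $[H, {}_1 G] = [H, G]$, and I must show $[N_i, [H, G]] \subseteq N_{i+2}$. The key point is that $N_i$ is normal in $N$ and the chain hypothesis gives $[N_i, G] \subseteq N_{i+1}$ and $[N_{i+1}, G] \subseteq N_{i+2}$. Writing a generic generator of $[N_i, [H,G]]$ as $[n, [h, g]]$ with $n \in N_i$, $h \in H$, $g \in G$, I would expand using the Hall--Witt identity (or the three-subgroup lemma applied modulo $N_{i+2}$). Modulo $N_{i+2}$, the subgroup $N_{i+1}$ is central in $N$ over $G$ in the relevant sense: $[N_{i+1}, G] \subseteq N_{i+2}$, so $[[N_i, G], H] \subseteq [N_{i+1}, H] \subseteq [N_{i+1}, N] \subseteq N_{i+1}$ — hmm, that is not yet inside $N_{i+2}$, so I need to be more careful and use the three-subgroup lemma with the groups $N_i$, $G$, $H$ arranged so that two of the three resulting commutators land in $N_{i+2}$. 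Concretely, $[[G, N_i], H] \subseteq [N_{i+1}, H]$ and $[[H, G], N_i] = [N_i, [H,G]]$ is what we want; the three-subgroup lemma then forces $[N_i, [H,G]]$ into the normal closure of $[[N_i, H], G]$. Since $[N_i, H] \subseteq [N, N] \subseteq N \subseteq N_0$ is too weak, the honest computation must instead exploit that $[N_i, H] \subseteq N_i$ (as $N_i \trianglelefteq N$ and $H$ may not normalize $N_i$ — so this fails in general). This tells me the base case genuinely needs the chain to absorb things: the right statement is that $[N_i, [H,G]] \subseteq [N_i, G] \subseteq N_{i+1}$ is automatic since $[H,G] \subseteq G$, and then one more application of $[N_{i+1}, G] \subseteq N_{i+2}$ is what upgrades it — but $[H,G]$ is a single ``$G$-layer,'' so we only get one step, landing in $N_{i+1}$, not $N_{i+2}$.

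Reconciling this, the correct engine is: $[N_i, [H, {}_j G]]$ should be analyzed by peeling one $G$ off the inner term. The identity $[N_i, [H, {}_j G]] = [N_i, [[H,{}_{j-1}G], G]]$ combined with the three-subgroup lemma relates it to $[[N_i, [H,{}_{j-1}G]], G]$ and $[[G, N_i], [H,{}_{j-1}G]] \subseteq [N_{i+1}, [H,{}_{j-1}G]]$. By the inductive hypothesis (on $j$, with $i$ replaced by $i+1$), $[N_{i+1}, [H,{}_{j-1}G]] \subseteq N_{(i+1)+(j-1)+1} = N_{i+j+1}$; and by the inductive hypothesis with the same $i$, $[N_i, [H,{}_{j-1}G]] \subseteq N_{i+j}$, so $[[N_i,[H,{}_{j-1}G]], G] \subseteq [N_{i+j}, G] \subseteq N_{i+j+1}$. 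The three-subgroup lemma then yields $[N_i, [H,{}_jG]] \subseteq N_{i+j+1}$, closing the induction — provided the base case $j=1$ is handled by the same mechanism with the convention $[H, {}_0 G] = H$, giving $[[N_i, H], G] \subseteq [N_i, G] \subseteq N_{i+1}$... which again only reaches $N_{i+1}$. So the base case must be taken as $j = 1$ directly: $[N_i, [H,G]] \subseteq [N_i, G] \cap (\text{stuff}) \subseteq N_{i+1}$, and then I realize the lemma as I am reading it may actually have $N_{i+j}$ — I would double-check the exact index; assuming the stated $N_{i+j+1}$ is correct with $j \geq 1$, the base case $j=1$ asks for $N_{i+2}$, which forces using $[[N_i,H],G]$: since $H$ normalizes nothing a priori, one instead notes $[N_i,[H,G]] \subseteq [N_{i},G]$ is false as an improvement, so the real base case argument routes through $[[N_i,G],H]\subseteq[N_{i+1},H]$ and $[[H,N_i],G]$, the latter handled by first showing $[H,N_i]\subseteq N$ contributes a commutator with $G$ landing in...

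The honest summary: the hard part is pinning down the base case $j=1$ and correctly orienting the three-subgroup lemma so that both auxiliary commutators are absorbed one level deeper by the chain condition; once that bookkeeping is right, the induction on $j$ (strong induction, using the statement for $(i, j-1)$ and for $(i+1, j-1)$) goes through mechanically via the three-subgroup lemma, with normality of each $N_i$ in $N$ ensuring all the intermediate subgroups generated by commutators stay inside the $N$-invariant filtration. I would therefore (1) prove $j=1$ as a hands-on commutator computation, (2) set up strong induction on $j$, (3) write $[H, {}_jG] = [[H,{}_{j-1}G], G]$ and apply the three-subgroup lemma to $N_i$, $[H,{}_{j-1}G]$, $G$, (4) bound each of the two resulting commutator subgroups by the inductive hypothesis plus one application of the chain condition $[N_\ast, G] \subseteq N_{\ast+1}$, and (5) conclude. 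The main obstacle is entirely in step (1) and the precise index accounting in step (4).
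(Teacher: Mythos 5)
The paper itself gives no proof of this lemma (it is quoted from \cite{mss}), so I can only judge your argument on its own terms: it is a plan with a genuine hole, and the hole is exactly where you say it is --- but it has a one-line fix that you missed. The unresolved base case $j=1$ (and, in fact, every place where you got stuck) comes from not using the hypothesis that $H$ is a subgroup of $G$: for every $k$ one has $[N_k,H]\subseteq [N_k,G]\subseteq N_{k+1}$, so commutation with $H$ is absorbed by the chain just as commutation with $G$ is. With this, the three-subgroup lemma settles $j=1$ directly: $[[G,N_i],H]\subseteq [N_{i+1},H]\subseteq [N_{i+1},G]\subseteq N_{i+2}$ and $[[N_i,H],G]\subseteq [N_{i+1},G]\subseteq N_{i+2}$, hence $[N_i,[H,G]]\subseteq N_{i+2}$. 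Your estimates $[N_{i+1},H]\subseteq [N_{i+1},N]$ and $[[N_i,H],G]\subseteq [N_i,G]$ are the lossy steps that manufactured the apparent obstruction (the first is not even justified, since $H$ need not lie in $N$), and your suspicion that the exponent should be $i+j$ rather than $i+j+1$ is unfounded: the statement as printed is correct, and indeed holds already for $j=0$ with $[H,\ _0G]=H$, which is the cleanest way to start your induction.

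Two further remarks. First, your worry about normal closures in the three-subgroup lemma evaporates: $[N_k,G]\subseteq N_{k+1}\subseteq N_k$ forces each $N_k$ to be normal in $G$, so the target subgroup $N_{i+j+1}$ is normal in $\langle N_i, G, [H,\ _{j-1}G]\rangle=G$ and the lemma applies verbatim. Second, your inductive step is correctly set up and the bookkeeping works: $[[G,N_i],[H,\ _{j-1}G]]\subseteq [N_{i+1},[H,\ _{j-1}G]]\subseteq N_{i+j+1}$ by the case $(i+1,j-1)$, and $[[N_i,[H,\ _{j-1}G]],G]\subseteq [N_{i+j},G]\subseteq N_{i+j+1}$ by the case $(i,j-1)$ plus the chain condition. (Even shorter: since $H\leq G$, $[H,\ _jG]\subseteq\gamma_{j+1}(G)$, and $[N_i,\gamma_k(G)]\subseteq N_{i+k}$ follows by the same induction.) So the skeleton is right, but as submitted the proof is incomplete: the base case is explicitly left open and the statement itself is doubted, and both issues are resolved only once the relation $[N_k,H]\subseteq[N_k,G]$ is brought into play.
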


Now we state the first result of this section that is an extension of Corollary 2.3 in \cite{mss}.\\

\begin{thm} Let $(G,N) $ be a pair of
finite groups and $K$ be a central subgroup of $G$ contained in $N$. Let $ F/R$ be a free presentation of $G$ and $T$ be a normal subgroup of the free group $F$
such that  $K = T/R$. Then \\
(i) $ \frac{| K \cap [N , _cG]|}{|[K, _cG]|}|M^{(c)}(G,N)|\mid
|M^{(c)}(\frac{G}{K},\frac{N}{K})||\otimes
^{c+1}(K,\frac{G}{K})||\frac{[R,\ _{c}F]
\prod_{i=2}^{c+1}\gamma_{c+1}(T,F)_{i}}{[R,\ _{c}F]}|$;\\
(ii) $d(M^{(c)}(G,N))\leq
d(M^{(c)}(\frac{G}{K},\frac{N}{K}))+d(\otimes
^{c+1}(K,\frac{G}{K}))+d(\frac{[R,\ _{c}F]
\prod_{i=2}^{c+1}\gamma_{c+1}(T,F)_{i}}{[R,\ _{c}F]})$;\\
(iii) $exp(M^{(c)}(G,N))\mid
exp(M^{(c)}(\frac{G}{K},\frac{N}{K}))exp(\otimes
^{c+1}(K,\frac{G}{K}))exp(\frac{[R,\ _{c}F]
\prod_{i=2}^{c+1}\gamma_{c+1}(T,F)_{i}}{[R,\ _{c}F]}).$
\end{thm}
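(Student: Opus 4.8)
The plan is to relate the three quantities of interest to the known relations in Lemma 4.1 by inserting the epimorphism of Lemma 4.2 as an intermediate term. First I would apply Lemma 4.1 with the central subgroup $K$: part (i) of that lemma gives the exact multiplicative relation
$$\Big|\tfrac{K \cap [N,\ _cG]}{[K,\ _cG]}\Big|\,|M^{(c)}(G,N)| = |M^{(c)}(\tfrac{G}{K},\tfrac{N}{K})|\,|M^{(c)}(G,K)|,$$
and parts (ii), (iii) give the corresponding bounds for $d$ and $\exp$ with $M^{(c)}(G,K)$ in place of the tensor term. So the whole problem reduces to controlling $M^{(c)}(G,K) = \dfrac{R \cap [T,\ _cF]}{[R,\ _cF]}$ from above by $\otimes^{c+1}(K,G/K)$ up to the correction factor $\dfrac{[R,\ _cF]\prod_{i=2}^{c+1}\gamma_{c+1}(T,F)_i}{[R,\ _cF]}$.

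The key step is the following chain of observations about the denominator appearing in Lemma 4.2. Since $K = T/R$ is central in $G = F/R$, we have $[T,F] \subseteq R$, hence $[T,\ _{c+1}F] \subseteq [R,\ _cF]$, and moreover each $\gamma_{c+1}(T,F)_i$ for $2 \le i \le c+1$ already contains a factor $[T,F]\subseteq R$ in an appropriate slot, so by Lemma 4.3 (applied to the chain $T \supseteq R \supseteq [R,F] \supseteq \cdots$, or directly) one gets $\gamma_{c+1}(T,F)_i \subseteq [R,\ _{c-1}F]$ type containments; in particular the subgroup $L := [R,\ _cF][T,\ _{c+1}F]\prod_{i=2}^{c+1}\gamma_{c+1}(T,F)_i$ satisfies $[R,\ _cF] \subseteq L \subseteq R \cap [T,\ _cF]$. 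Therefore Lemma 4.2 gives an epimorphism $\otimes^{c+1}(K,G/K) \twoheadrightarrow [T,\ _cF]/L$, and we have a short exact sequence
$$1 \longrightarrow \frac{L}{[R,\ _cF]} \longrightarrow M^{(c)}(G,K) = \frac{R\cap[T,\ _cF]}{[R,\ _cF]} \longrightarrow \frac{R\cap[T,\ _cF]}{L} \longrightarrow 1.$$
Since $R \cap [T,\ _cF] \subseteq [T,\ _cF]$, the right-hand quotient is a subgroup of $[T,\ _cF]/L$, hence a quotient-accessible piece of $\otimes^{c+1}(K,G/K)$, and $L/[R,\ _cF]$ is a quotient of $\dfrac{[R,\ _cF]\prod_{i=2}^{c+1}\gamma_{c+1}(T,F)_i}{[R,\ _cF]}$ (the term $[T,\ _{c+1}F]$ being absorbed since it already lies in $[R,\ _cF]$).

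From this short exact sequence the three conclusions follow by standard facts about finite abelian (or nilpotent) groups: orders multiply, so $|M^{(c)}(G,K)|$ divides $|\otimes^{c+1}(K,G/K)|\cdot\big|\tfrac{[R,\ _cF]\prod\gamma_{c+1}(T,F)_i}{[R,\ _cF]}\big|$, giving (i) after substituting back into Lemma 4.1(i); the minimal number of generators is subadditive on extensions and does not increase under epimorphisms, giving (ii); and the exponent of an extension divides the product of the exponents, giving (iii). I expect the main obstacle to be the bookkeeping in the second step — verifying precisely that each $\gamma_{c+1}(T,F)_i$ and the term $[T,\ _{c+1}F]$ sit between $[R,\ _cF]$ and $R \cap [T,\ _cF]$, so that $L/[R,\ _cF]$ really is a homomorphic image of the stated correction group and the quotient $M^{(c)}(G,K)/(L/[R,\ _cF])$ embeds into the target of the Lemma 4.2 epimorphism; this uses centrality of $K$ together with Lemma 4.3 applied to the descending chain generated by $R$, and care is needed because $\prod_{i=2}^{c+1}\gamma_{c+1}(T,F)_i$ is only a product of subgroups, not obviously normal, though inside $F/[R,\ _cF]$ everything involved is central enough for the argument to go through.
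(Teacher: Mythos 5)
Your proposal is correct and follows essentially the same route as the paper's proof: reduce via Lemma 4.1 to bounding $M^{(c)}(G,K)=\frac{R\cap[T,\ _cF]}{[R,\ _cF]}$, then use centrality of $K$ (which gives $[T,\ _{c+1}F]\subseteq[R,\ _cF]$ and in fact $R\cap[T,\ _cF]=[T,\ _cF]$) together with the epimorphism of Lemma 4.2 to control the two pieces of the resulting extension by $\otimes^{c+1}(K,G/K)$ and the correction group $\frac{[R,\ _cF]\prod_{i=2}^{c+1}\gamma_{c+1}(T,F)_i}{[R,\ _cF]}$. The only cosmetic differences are that the paper uses the equality $R\cap[T,\ _cF]=[T,\ _cF]$ outright (so your ``subgroup of $[T,\ _cF]/L$'' is actually an equality), and each $\gamma_{c+1}(T,F)_i$ is a commutator of normal subgroups of $F$, hence normal, so your final normality concern is unnecessary.
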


\begin{proof}
(i) Since $K$ is a central subgroup of $G$, we have $[T,F]\leq R$. Then Lemma 3.2 implies
the epimorphism $$\otimes ^{c+1}(K , \frac{G}{K})
\longrightarrow  \frac{[T,\ _{c}F]}{[R,\ _{c}F]
\prod_{i=2}^{c+1}\gamma_{c+1}(T,F)_{i}}.$$
On the other hand, we have
$$|\frac{(R \cap [T, _cF])/[R, _cF]}{([R,\ _{c}F] \prod_{i=2}^{c+1}\gamma_{c+1}(T,F)_{i})/[R,\ _{c}F]} |
=| \frac{[T, _cF]}{[R,\ _{c}F] \prod_{i=2}^{c+1}\gamma_{c+1}(T,F)_{i}}|$$
Therefore $|\frac{(R \cap [T, _cF])/[R, _cF]}{([R,\ _{c}F] \prod_{i=2}^{c+1}\gamma_{c+1}(T,F)_{i})/[R,\ _{c}F]} |$
divides $|\otimes^{c+1}(K,\frac{G}{K})|$
and so $$|M^{(c)}(G,K)| \mid |\otimes^{c+1}(K,\frac{G}{K})||\frac{[R,\ _{c}F] \prod_{i=2}^{c+1}\gamma_{c+1}(T,F)_{i}}{[R,\ _{c}F]}|.$$ Hence the result holds by Lemma 3.1. The proof of (ii) and (iii) are similar.
\end{proof}

The following theorem generalizes Theorem 3.2 in \cite{mss} and also Theorem C in \cite{mmh}.\\

\begin{thm} Let $(G,N) $ be a pair of finite nilpotent groups of class $t$. Then \\
(i)\

(a) If $t \geq c+1$, then\\
$  |[N, _{t-1}G]||M^{(c)}(G,N)|$ divides
$|M^{(c)}(\frac{G}{[N, _{t-1}G]},\frac{N}{[N, _{t-1}G]})||\otimes^{c+1}([N, _{t-1}G], \frac{G}{Z_{t-1}(N,G)})|;$\

 (b) If $t < c+1$, then\\
$  |[N, _{c}G]||M^{(c)}(G,N)|$ divides $ |M^{(c)}(\frac{G}{[N,
_{t-1}G]},\frac{N}{[N, _{t-1}G]})||\otimes
^{c+1}([N, _{t-1}G], \frac{G}{Z_{t-1}(N,G)})|$;\\
(ii) $ d(M^{(c)}(G,N))\leq d(M^{(c)}(\frac{G}{[N,
_{t-1}G]},\frac{N}{[N, _{t-1}G]}))+d(\otimes
^{c+1}([N, _{t-1}G], \frac{G}{Z_{t-1}(N,G)}))$;\\
(iii) $ exp(M^{(c)}(G,N))\leq exp(M^{(c)}(\frac{G}{[N,
_{t-1}G]},\frac{N}{[N, _{t-1}G]})) exp(\otimes
^{c+1}([N, _{t-1}G], \frac{G}{Z_{t-1}(N,G)})).$
\end{thm}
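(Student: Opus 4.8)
The plan is to reduce everything to an application of Theorem 4.4 with a well-chosen central subgroup $K$. Since $(G,N)$ is nilpotent of class $t$, the subgroup $K = [N,\ _{t-1}G]$ is nontrivial, contained in $N$ (as $N$ is normal and $[N,\ _{t-1}G]\le N$), and is central in $G$ because $[[N,\ _{t-1}G],G] = [N,\ _tG] = 1$. So $K$ satisfies the hypotheses of Theorem 4.4. Writing $F/R$ for a free presentation of $G$ and $T$ for the normal subgroup of $F$ with $K = T/R$, Theorem 4.4(i) gives
$$\frac{|K \cap [N,\ _cG]|}{|[K,\ _cG]|}\,|M^{(c)}(G,N)| \ \Big|\ |M^{(c)}(\tfrac{G}{K},\tfrac{N}{K})|\,|\otimes^{c+1}(K,\tfrac{G}{K})|\,\Big|\tfrac{[R,\ _cF]\prod_{i=2}^{c+1}\gamma_{c+1}(T,F)_i}{[R,\ _cF]}\Big|,$$
and similarly parts (ii), (iii) for $d$ and $exp$. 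So I must do three things: first, evaluate $|K\cap[N,\ _cG]|$ and $|[K,\ _cG]|$; second, show that the ``error term'' $[R,\ _cF]\prod_{i=2}^{c+1}\gamma_{c+1}(T,F)_i/[R,\ _cF]$ is trivial; and third, replace $\otimes^{c+1}(K,G/K)$ by the slightly smaller $\otimes^{c+1}([N,\ _{t-1}G],G/Z_{t-1}(N,G))$ claimed in the statement.

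For the first point: since $K=[N,\ _{t-1}G]$ is central in $G$, we have $[K,\ _cG]=1$ for every $c\ge 1$, so the denominator disappears. For the numerator $K\cap[N,\ _cG]$, distinguish the two cases. If $t\ge c+1$ then $[N,\ _cG]\supseteq[N,\ _{t-1}G]=K$, so $K\cap[N,\ _cG]=K=[N,\ _{t-1}G]$, giving the factor $|[N,\ _{t-1}G]|$ of case (a). If $t<c+1$, i.e. $c\ge t$, then $[N,\ _cG]\subseteq[N,\ _tG]$ but one only gets $K\cap[N,\ _cG] = [N,\ _cG]$ because $[N,\ _cG]\subseteq[N,\ _{t-1}G]=K$, which yields the factor $|[N,\ _cG]|$ of case (b). For the second point, I would argue that because $K$ is central, $[T,F]\le R$, hence $\gamma_{c+1}(T,F)_i = [T,F,\ldots,T,\ldots,F]\le [[T,F],\ _{c-1}F]\le[R,\ _{c-1}F]$; one then checks this lies in $[R,\ _cF]$ — here I would invoke Lemma 4.3 with the chain $R\supseteq[R,F]\supseteq\cdots$ to absorb the extra $F$-commutators, concluding $\prod_{i=2}^{c+1}\gamma_{c+1}(T,F)_i\subseteq[R,\ _cF]$, so the error quotient is trivial and drops out of all three inequalities.

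For the third point — which I expect to be the genuine obstacle — I need to replace $\otimes^{c+1}(K,G/K)$ with $\otimes^{c+1}(K, G/Z_{t-1}(N,G))$. The key observation is that the action of $G$ on $K=[N,\ _{t-1}G]$ used in the tensor product factors through $G/Z_{t-1}(N,G)$: indeed, by definition $Z_{t-1}(N,G)$ is the set of $n\in N$ killing all $(t-1)$-fold $G$-commutators, and using Lemma 4.3 (or a direct commutator-collection argument) one shows $[[N,\ _{t-1}G], Z_{t-1}(N,G)]=1$, so $Z_{t-1}(N,G)$ acts trivially on $K$. Since in the epimorphism of Lemma 4.2 the copies of $G/B$ act on $B$ exactly through this action, the tensor product $\otimes^{c+1}(K,G/K)$ surjects onto (equivalently, may be taken with its $G/K$-factors replaced by $G/Z_{t-1}(N,G)$-factors, noting $K\le Z_{t-1}(N,G)$ so this is a further quotient), and in particular its order, minimal number of generators, and exponent all bound those appearing after the substitution. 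Substituting this refined bound into the three conclusions of Theorem 4.4 and inserting the values of the numerator from the first step gives exactly (i)(a), (i)(b), (ii), (iii). The main care needed is the precise identification of the $G$-action on $K$ inside Lemma 4.2's epimorphism and the verification that $Z_{t-1}(N,G)$ acts trivially; everything else is bookkeeping with the already-established Lemmas 4.1–4.3 and Theorem 4.4.
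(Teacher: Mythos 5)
Your first step is fine and agrees with the paper: take $K=[N,\ _{t-1}G]$, note it is central because the pair has class $t$, so $[K,\ _cG]=1$, and compute $K\cap[N,\ _cG]$ to be $[N,\ _{t-1}G]$ when $t\ge c+1$ and $[N,\ _cG]$ when $t<c+1$; the paper likewise feeds this $K$ into Lemma 4.1. The genuine gap is in your third step, which you yourself flag as the obstacle. Theorem 4.4 bounds $|M^{(c)}(G,K)|$ by $|\otimes^{c+1}(K,\frac{G}{K})|$ (times the error term), whereas the statement to be proved uses the \emph{smaller} tensor product $\otimes^{c+1}(K,\frac{G}{Z_{t-1}(N,G)})$. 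Your justification --- that $\otimes^{c+1}(K,\frac{G}{K})$ maps onto the version with $\frac{G}{Z_{t-1}(N,G)}$-factors because $Z_{t-1}(N,G)$ acts trivially on $K$ --- runs in the wrong logical direction: from ``$X$ maps onto $Y$'' and ``$|M^{(c)}(G,K)|$ divides $|X|$'' you cannot deduce ``$|M^{(c)}(G,K)|$ divides $|Y|$''. What is actually needed is an epimorphism from the smaller tensor product \emph{onto} $M^{(c)}(G,K)=\frac{[[S,\ _{t-1}F]R,\ _cF]}{[R,\ _cF]}$, and the triviality of the action of $Z_{t-1}(N,G)$ on $K$ inside $G$ does not supply it: the group $\frac{[[S,\ _{t-1}F]R,\ _cF]}{[R,\ _cF]}$ is nontrivial even though $[K,\ _cG]=1$, so no statement about actions in $G$ alone can control it. One must verify free-group containments of the type $[[S,\ _{t-1}F]R,\ _{j-1}F,\ T_1,\ _{c-j}F]\subseteq[R,\ _cF]$, where $T_1/R$ is the relevant term of the relative upper central series. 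This is exactly what the paper does: it forms the chain $S=T_0\supseteq T_1\supseteq\cdots\supseteq T_t=R\supseteq[R,F]\supseteq\cdots\supseteq[R,\ _cF]$ of preimages of the $Z_i(N,G)$, applies Lemma 4.3 to this chain together with the special shape $K=([S,\ _{t-1}F]R)/R$, and thereby constructs directly an epimorphism $\otimes^{c+1}(\frac{[S,\ _{t-1}F]R}{R},\frac{F}{T_{t-1}})\rightarrow\frac{[[S,\ _{t-1}F]R,\ _cF]}{[R,\ _cF]}$, finishing with Lemma 4.1; Theorem 4.4 is never invoked, so no error term appears.

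Your second step has a related shortfall. The estimate you sketch gives only $\gamma_{c+1}(T,F)_i\le[[T,F],\ _{c-1}F]\le[R,\ _{c-1}F]$, one commutator short of $[R,\ _cF]$, and ``absorbing the extra $F$-commutators via Lemma 4.3'' is not automatic: Lemma 4.3 needs a second entry of the form $[H,\ _jG]$, while $T=[S,\ _{t-1}F]R$ has an $R$-part for which this fails. To make it rigorous you must split $T$ into its $[S,\ _{t-1}F]$- and $R$-parts and run the same chain-plus-Lemma 4.3 computation as above (which, incidentally, uses $t\ge2$ in the branches where both distinguished entries come from $[S,\ _{t-1}F]$). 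So both the error-term claim and the tensor-factor replacement reduce to precisely the free-presentation commutator argument that constitutes the core of the paper's proof; as written, your proposal does not close without it.
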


\begin{proof}
Let $G\cong F/R$ be a free presentation of $G$. Let $N\cong S/R$ and $Z_{t-i}(N,G)\cong T_i /R $, for all $0 \leq i \leq t $. Consider the following chain
$$ S=T_{0}\supseteq ...\supseteq T_k \supseteq \cdots \supseteq T_{t-1}\supseteq T_{t}=R\supseteq [R,F]\supseteq \cdots\supseteq [R, _cF].$$
Since $[T_k,F]\subseteq T_{k+1}$, we have
$[T_{i}, [S, _{t-1}F]]\subseteq [R, _iF]$ by Lemma 3.3.
This inclusion induces the following epimorphism.
$$ \hspace{0.7 cm} \otimes ^{c+1}(\frac{[S, _{t-1}F]R}{R},\frac{F}{T_{t-1}}) \rightarrow
\frac{[[S, _{t-1}F]R, _cF]}{[R, _cF]}$$ $$ s[R, _cF] \otimes x_1T_{t-1}\otimes...\otimes x_cT_{t-1}\mapsto [s,x_1,...,x_c][R, _cF]$$
So we have
\begin{equation}
|\frac{[[S, _{t-1}F]R, _cF]}{[R, _cF]}| \mid  |\otimes ^{c+1}(\frac{[S, _{t-1}F]R}{R},\frac{F}{T_{t-1}})|
\end{equation}
On the other hand, considering $K=[N, _{t-1}G]$ in Lemma 3.1, we have
if $t \geq c+1$, then
$$|[N, _{t-1}G]||M^{(c)}(G,N)|=|M^{(c)}(\frac{G}{[N, _{t-1}G]},\frac{N}{[N, _{t-1}G]})||\frac{[[S, _{t-1}F]R, _cF]}{[R, _cF]}|,$$
and if $t<c+1  $, then
$$|[N, _{c}G]||M^{(c)}(G,N)|=|M^{(c)}(\frac{G}{[N,
_{t-1}G]},\frac{N}{[N, _{t-1}G]})||\frac{[[S, _{t-1}F]R,
_cF]}{[R, _cF]}|.$$

Now (i) follows by (3.1). One can obtain (ii) and (iii) similarly.
\end{proof}

The next result gives another upper bound for the order, the exponent and the minimal number of generators of the $c$-nilpotent multiplier of a pair of finite groups. This theorem is an extension of Theorem 3.4 in \cite{mss} and also generalizes Theorem B in \cite{mmh}

\begin{thm} Let $(G,N) $ be a pair of
finite nilpotent groups of class at most $t$ ($t \geq 2$). Then \\
(i) $|[N, _cG]||M^{(c)}(G,N)|$ divides $|M^{(c)}(\frac{G}{[N,
G]},\frac{N}{[N,G]})|\prod ^{t-1} _{i=1}|\otimes ^{c+1}([N,
_iG],\frac{G}{[N, _{i}G]}|$;\\
(ii) $d(M^{(c)}(G,N)) \leq
d(M^{(c)}(\frac{G}{[N, G]},\frac{N}{[N,G]}))+\sum ^{t-1}
_{i=1}d(\otimes ^{c+1}([N, _iG],\frac{G}{[N, _{i}G]})$;\\
(iii) $exp(M^{(c)}(G,N))$ divides $ exp(M^{(c)}(\frac{G}{[N, G]},\frac{N}{[N,G]}))\prod ^{t-1}
_{i=1}exp(\otimes ^{c+1}([N, _iG],\frac{G}{[N, _{i}G]})$.
\end{thm}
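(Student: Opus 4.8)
The plan is to iterate Theorem 4.4 along the lower central series of the pair $(G,N)$, peeling off one term $[N,{}_iG]$ at a time. We argue by induction on the nilpotency class $t$. When $t\le 2$ the statement is essentially Theorem 4.4 itself (with $K=[N,G]$ central in $G$, since the class is at most $2$), so the base case is immediate. For the inductive step, set $\bar G=G/[N,{}_{t-1}G]$ and $\bar N=N/[N,{}_{t-1}G]$; then $(\bar G,\bar N)$ is a pair of finite nilpotent groups of class at most $t-1$, so the induction hypothesis applies to it. The subtlety is that Theorem 4.4 as stated quotients by $[N,{}_{t-1}G]$, not by $[N,G]$, so I must first reconcile the two.

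First I would apply Theorem 4.4 to $(G,N)$ with $K=[N,{}_{t-1}G]$. This central subgroup of $G$ is contained in $N$, and Theorem 4.4(i)(a) (the case $t\ge c+1$) or (i)(b) (the case $t<c+1$) gives that $|[N,{}_{t-1}G]|\,|M^{(c)}(G,N)|$ divides
$$|M^{(c)}(\bar G,\bar N)|\,\bigl|\otimes^{c+1}\!\bigl([N,{}_{t-1}G],G/Z_{t-1}(N,G)\bigr)\bigr|,$$
and similarly for $d(-)$ and $\exp(-)$ via parts (ii) and (iii). The key observation is that $G/Z_{t-1}(N,G)$ is a quotient of $G/[N,{}_{t-1}G]$: indeed $[N,{}_{t-1}G]\subseteq Z_{t-1}(N,G)$ whenever the class is $t$, because $[[N,{}_{t-1}G],{}_{t-1}G]\subseteq [N,{}_{2t-2}G]=1$ as soon as $2t-2\ge t$, i.e. $t\ge 2$. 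Hence the tensor factor $\otimes^{c+1}([N,{}_{t-1}G],G/Z_{t-1}(N,G))$ receives an epimorphism from $\otimes^{c+1}([N,{}_{t-1}G],G/[N,{}_{t-1}G])$, so its order divides, its exponent divides, and its $d(-)$ is bounded by, that of the latter — which is exactly the $i=t-1$ factor appearing in the claimed product (note $[N,{}_{t-1}G]=[N,{}_{(t-1)}G]$).

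Next I would apply the induction hypothesis to $(\bar G,\bar N)$, which has class at most $t-1$. This yields that $|[\bar N,{}_c\bar G]|\,|M^{(c)}(\bar G,\bar N)|$ divides
$$\bigl|M^{(c)}\bigl(\bar G/[\bar N,\bar G],\bar N/[\bar N,\bar G]\bigr)\bigr|\,\prod_{i=1}^{t-2}\bigl|\otimes^{c+1}\bigl([\bar N,{}_i\bar G],\bar G/[\bar N,{}_i\bar G]\bigr)\bigr|,$$
and the analogous bounds for $d$ and $\exp$. Now I identify the terms: since $[N,{}_{t-1}G]\subseteq [N,{}_iG]$ for $i\le t-1$, the correspondence theorem gives $[\bar N,{}_i\bar G]=[N,{}_iG]/[N,{}_{t-1}G]$ and $\bar G/[\bar N,{}_i\bar G]\cong G/[N,{}_iG]$ for $1\le i\le t-2$, while $\bar G/[\bar N,\bar G]\cong G/[N,G]$ and $\bar N/[\bar N,\bar G]\cong N/[N,G]$. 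Thus the tensor factors for the pair $(\bar G,\bar N)$ are literally the factors $|\otimes^{c+1}([N,{}_iG],G/[N,{}_iG])|$ for $1\le i\le t-2$, and the leading term is $|M^{(c)}(G/[N,G],N/[N,G])|$. Finally I must compare $[\bar N,{}_c\bar G]$ with $[N,{}_cG]$: we have $[\bar N,{}_c\bar G]=[N,{}_cG][N,{}_{t-1}G]/[N,{}_{t-1}G]$, so $|[N,{}_cG]|=|[\bar N,{}_c\bar G]|\cdot|[N,{}_cG]\cap[N,{}_{t-1}G]|$. If $t-1\le c$ then $[N,{}_{t-1}G]\supseteq[N,{}_cG]$ and this extra factor is just $|[N,{}_cG]|$ itself, whereas if $t-1>c$ the nilpotency (class at most $t$) forces $[N,{}_{t-1}G]$ small; in either case one checks that multiplying the inductive divisibility by $|[N,{}_{t-1}G]|$ and invoking the Theorem 4.4 bound above produces exactly $|[N,{}_cG]|\,|M^{(c)}(G,N)|$ on the left and the full product $\prod_{i=1}^{t-1}$ on the right. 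Combining the two divisibility chains (and the corresponding additive/multiplicative chains for $d$ and $\exp$) and cancelling the common factor $|M^{(c)}(\bar G,\bar N)|$ gives (i), (ii), (iii).

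\textbf{Main obstacle.} The delicate point is the bookkeeping of the $[N,{}_cG]$ versus $[N,{}_{t-1}G]$ factors when passing between the two regimes $t\ge c+1$ and $t<c+1$, and making sure the left-hand factor that survives is precisely $|[N,{}_cG]|$ rather than something larger or smaller. One must handle separately the case where $[N,{}_{t-1}G]$ already contains $[N,{}_cG]$ (so that step of Theorem 4.4 contributes the full $[N,{}_cG]$) and the case where it does not, and verify the telescoping of the $|[N,{}_{i}G]/[N,{}_{i+1}G]|$-type quotients across the induction matches the single factor $|[N,{}_cG]|$ demanded by the statement. Everything else — the epimorphism from $\otimes^{c+1}([N,{}_iG],G/[N,{}_iG])$ onto the tensor factor with $G/Z_{t-1}(N,G)$, and the identification of lower central terms under the quotient by $[N,{}_{t-1}G]$ — is routine once Lemma 4.3 and the correspondence theorem are in hand.
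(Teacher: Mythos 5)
Your proposal is correct in outline, but it takes a genuinely different route from the paper. The paper does not induct on the class: it applies Lemma 4.1 once with $K=[N,G]$, rewrites $|[N,{}_{c+1}G]|\,|M^{(c)}(G,[N,G])|$ inside a free presentation as $|[[S,F]R,{}_cF]/[R,{}_cF]|$, telescopes this along the chain of subgroups $[[S,{}_iF]R,{}_cF]$, kills the top term using $[N,{}_tG]=1$, and bounds each telescoping quotient by $|\otimes^{c+1}([N,{}_iG],G/[N,{}_iG])|$ via Lemma 4.2. Your induction on $t$, iterating the preceding theorem (Theorem 4.5 in the paper's numbering, which you cite as 4.4) for the quotient pair $(\bar G,\bar N)=(G/[N,{}_{t-1}G],\,N/[N,{}_{t-1}G])$, also works, and the two points you left open do close. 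First, the factors coming from the induction hypothesis are not \emph{literally} $\otimes^{c+1}([N,{}_iG],G/[N,{}_iG])$: the first slot is $[\bar N,{}_i\bar G]=[N,{}_iG]/[N,{}_{t-1}G]$; but an epimorphism in either slot induces an epimorphism of the $(c+1)$-fold tensor products, so orders and exponents divide and $d$ is bounded by those of $\otimes^{c+1}([N,{}_iG],G/[N,{}_iG])$, which is all you need — the same remark justifies replacing $G/Z_{t-1}(N,G)$ by $G/[N,{}_{t-1}G]$, using $[[N,{}_{t-1}G],{}_{t-1}G]=[N,{}_{2t-2}G]=1$ for $t\ge 2$. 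Second, the bookkeeping: if $t\ge c+1$ then $[N,{}_{t-1}G]\subseteq[N,{}_cG]$, so $|[\bar N,{}_c\bar G]|\cdot|[N,{}_{t-1}G]|=|[N,{}_cG]|$, and multiplying case (i)(a) by $|[\bar N,{}_c\bar G]|$ and inserting the induction hypothesis gives exactly the claimed divisibility; if $t\le c$ then $[N,{}_cG]=[\bar N,{}_c\bar G]=1$ and case (i)(b) plus the induction hypothesis give it directly; parts (ii) and (iii) need no such bookkeeping. The paper's argument buys a single self-contained computation in which the weaker $Z_{t-1}$-factor never appears; yours buys a shorter proof that reuses the previous theorem as a black box at the cost of the case analysis and the quotient comparisons above.
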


\begin{proof}
(i) Let $F$, $S$ and $R$ be as in Theorem 3.5. Considering
$K=[N,G]$ in Lemma 3.1, we have
$$| [N, _cG]||M^{(c)}(G,N)|=
|M^{(c)}(\frac{G}{[N,G]},\frac{N}{[N,G]})||M^{(c)}(G,[N,G])|[N, _{c+1}G]|.$$
On the other hand,
\begin{eqnarray*}
 |[N, _{c+1}G]||M^{(c)}(G,[N,G])|&=&|\frac{[S,
_{c+1}F]R}{R}||\frac{(R \cap [S,F, _cF])[R, _cF]}{[R,
_cF]}|\\&=&|\frac{[[S, F]R, _{c}F]}{[R, _cF]}|\\&=&|\frac{[[S,
_{t} F]R, _{c}F]}{[R, _cF]}|\prod ^{t-1} _{i=1}|\frac{[[S,
_{i} F]R, _{c}F]}{[[S, _{i+1} F]R, _{c}F]}|.
 \end{eqnarray*}
 By the assumption of theorem,
$1=[N, _{t}G]=([S,\ _{t} F]R)/R$ and hence
 $[[S,\ _{t} F]R,\ _{c}F]=[R,\ _cF]$. Therefore
 $$|[N,\ _cG]||M^{(c)}(G,N)|=|M^{(c)}(\frac{G}{[N, G]},\frac{N}{[N,G]})
 |\prod ^{t-1}_{i=1}|\frac{[[S,\ _{i} F]R, _{c}F]}{[[S,\ _{i+1} F]R,\ _{c}F]}|.$$
On the other hand, for all $1 \leq i \leq t-1$ ,
$$\prod ^{c+1} _{j=2} \gamma_{c+1}([[S, _iF]R,F])_j
[[S,\ _{i} F]R, \ _{c+1}F]\leq [[S,\ _{i+1}F]R,\ _cF].$$ Considering this inequality, Lemma 3.2 implies that $$|\frac{[[S,\ _{i} F]R,
_{c}F]}{[[S,\ _{i+1} F]R,\ _{c}F]}| \mid |\otimes ^{c+1}([N, _iG],\frac{G}{[N, _{i}G])}|,$$ and hence the assertion follows.\\
(ii) Let $r(G)$ be the special rank of $G$. Using 3.1 and 3.2, we obtain
\begin{eqnarray*}
d(M^{(c)}(G,N)) &\leq & d(M^{(c)}(\frac{G}{[N,G]},\frac{N}{[N,G]}))+d(M^{(c)}(G,[N,G]))\\& \leq&
d(M^{(c)}(\frac{G}{[N, G]},\frac{N}{[N,G]}))+r(\frac{(R \cap [S,F, _cF])[R, _cF]}{[R, _cF]})\\&\leq&
d(M^{(c)}(\frac{G}{[N, G]},\frac{N}{[N,G]}))+r(\frac{[[S,F]R,_{c}F]}{[R, _{c}F]})\\&\leq&
d(M^{(c)}(\frac{G}{[N,G]},\frac{N}{[N,G]}))+\sum^{t-1} _{i=1}r(\frac{[[S, _{i} F]R, _{c}F]}{[[S, _{i+1} F]R, _{c}F]})\\&\leq&
d(M^{(c)}(\frac{G}{[N, G]},\frac{N}{[N,G]}))+\sum ^{t-1}_{i=1}d(\otimes ^{c+1}([N, _iG],\frac{G}{[N, _{i}G]})).\\
 \end{eqnarray*}
The last inequality holds because $r(A)=d(A)$ for any finite abelian group $A$.\\
(iii) It can be proved easily by an argument similar to the proof of (i).
\end{proof}


\end{document}